\newtheorem{theorem}{Theorem}[section]
\newtheorem{exmp}{Example}[section]
\newtheorem{lemma}[theorem]{Lemma}
\newtheorem{definition}[theorem]{Definition}
\newcommand{\Z}{\mathbb{Z}}
\newcommand{\Q}{\mathbb{Q}}
\title{The Farey Sequence, Stern-Brocot Tree and Euclid’s Theorem}
\author{Charles Alba \and Nathan Hill}
\date{\today}
\begin{document}

\maketitle

\section{Introduction}

Farey's sequence, invented by John Farey, is a procedure used to generate proper fractions from 0 to 1 (from citation \cite{citation01}). Farey sequence is commonly used in rational approximations of irrational numbers, ford circles and in Riemann hypothesis (from citation \cite{citation02}). In this paper, we aim to use properties of the Farey's sequence to prove the popular gcd theorem. We will define a sequence of subsets using a method called the Mediants Property, defined by theorem \ref{def:Mediant Property}. Most of our definitions and related works will be based on the works of Ainsworth, Dawson, Pianta, Warwick as well as Knott, found under citations \cite{citation03}, \cite{citation04}. 

\begin{definition}[Farey Sequence]
\label{def:Farey's sequence}
The {\it Farey's sequence} is a sequence of subsets $F_n \in \Q$ defined by
 $F_n=\{ \frac{a}{b}:\frac{a}{b} \in \Q , 0 \le a \le b \le n \}$
\end{definition}

\begin{exmp}
\label{exmp: 1.1}
\[F_1 = \left\lbrace\frac{0}{1},\frac{1}{2},\frac{1}{1}\right\rbrace \]
\[F_2 =  \left\lbrace\frac{0}{1},\frac{1}{3},\frac{1}{2},\frac{2}{3},\frac{1}{1}\right\rbrace \]
\[F_3 =  \left\lbrace\frac{0}{1},\frac{1}{4},\frac{1}{3},\frac{1}{2},\frac{2}{3},\frac{3}{4},\frac{1}{1}\right\rbrace \]
\end{exmp}

\begin{definition}[order]
 \label{def:Order} 
In the Farey's series, the {\it order} of a series is the size of maximum denominator of a series. 
\end{definition}

\begin{exmp}
Using Example \ref{exmp: 1.1}, \\
$F_1$ has order 2, \\$F_2$ has order 3, \\$F_3$ has order 4
\end{exmp}

Now that we have defined the Farey's sequence, we would like to introduce the Mediant, which is a key property of Farey's sequence that would be important towards the Stern-Brocot Tree found in section \ref{sec:Stern-Brocot Tree}. 

\begin{definition}[Mediant]
\label{def:Mediant}
The {\it Mediant} of two fractions can be expressed as: 
\[ \frac{a}{b} \bigoplus  \frac{c}{d} =  \frac{a+c}{b+d} \] 
\end{definition}

The mediant has an important property that we will later prove and use to prove the gcd theorem in section \ref{sec:Stern-Brocot Tree}

\begin{theorem} [Mediant Property]
\label{def:Mediant Property}
The {\it Mediant Property} states that if $\frac{a}{b} \textless \frac{c}{d}$ then their mediant $\frac{a+c}{b+d}$ lies between them, $\frac{a}{b} \textless \frac{a+c}{b+d} \textless \frac{c}{d}$
\end{theorem}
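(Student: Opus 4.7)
The plan is to reduce both inequalities of the chain $\frac{a}{b} < \frac{a+c}{b+d} < \frac{c}{d}$ to the single cross-multiplied form of the hypothesis. Since the Farey sequence is built from fractions with positive denominators, I will assume $b, d > 0$ throughout, which lets me clear denominators without worrying about inequality flips.

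First I would rewrite the hypothesis $\frac{a}{b} < \frac{c}{d}$ in the equivalent form $ad < bc$ by multiplying both sides by the positive quantity $bd$. This is the single algebraic fact I intend to exploit twice.

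Next, to establish the left inequality $\frac{a}{b} < \frac{a+c}{b+d}$, I would cross-multiply (using that $b$ and $b+d$ are both positive) to reduce it to $a(b+d) < b(a+c)$. Expanding and cancelling the common $ab$ term on each side collapses this to $ad < bc$, which is precisely the hypothesis. For the right inequality $\frac{a+c}{b+d} < \frac{c}{d}$, I would perform the symmetric manipulation: cross-multiply to get $(a+c)d < c(b+d)$, expand, cancel the common $cd$, and once again arrive at $ad < bc$.

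There is essentially no conceptual obstacle here; the only thing to be careful about is the sign of the denominators, since cross-multiplication reverses inequalities when multiplying by a negative number. Because Definition \ref{def:Farey's sequence} restricts to fractions with $0 \le a \le b \le n$, the denominators are positive integers, so the cross-multiplication steps are valid as written. I would make this positivity hypothesis explicit at the start of the proof, then present the two cross-multiplication chains side by side.
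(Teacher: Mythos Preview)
Your proposal is correct and follows essentially the same approach as the paper: both arguments reduce each of the two inequalities to the single algebraic fact $bc - ad > 0$ (equivalently $ad < bc$) obtained from the hypothesis, with the paper phrasing this via the subtracted fractions $\frac{bc-ad}{b(b+d)}$ and $\frac{bc-ad}{d(b+d)}$ rather than by cross-multiplication. Your write-up is actually cleaner than the paper's, since you make the positivity of the denominators explicit and avoid the paper's confusing opening line, which reads as though it assumes the conclusion.
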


\begin{definition}[Euclidean Algorithm]
\label{def:Euclidean Algorithm}
The {\it Euclidean Algorithm} is a process for determining the gcd of two $\Z$, $a$ and $b$.\\
Define $b=q_1a+a_1$\\
 $a=q_2a_1+a_2$\\
 $a_1=q_3a_2+a_3$\\
...\\
 $a_n=q_{n+2}a_{n+1}$\\
Where $a_{n+1}=gcd(a,b)$
\end{definition}

\begin{theorem}[GCD theorem]
\label{def: gcd theorem} 
The {\it gcd theorem}  states that for coprime numbers $m,n \in \Z$, there exists $x,y \in \Z$, such that $xm+yn=1$, where $1=gcd(m,n)$.
\end{theorem}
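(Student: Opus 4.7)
The plan is to exploit the structure of the Farey sequence $F_n$ to extract Bézout coefficients directly from the two fractions that neighbour $\tfrac{m}{n}$ in $F_n$. First I would reduce to the case $0 < m < n$: if either $m$ or $n$ is zero then coprimality forces the other to be $\pm 1$ and the identity is immediate; the signs of $m,n$ can be absorbed into $x,y$; and swapping $m$ and $n$ merely swaps the roles of $x$ and $y$. Only the case $m=n=1$ needs a separate (trivial) remark, after which I may assume $0 < m < n$ with $\gcd(m,n)=1$, so that $\tfrac{m}{n}$ lies in $F_n$ and is strictly interior.

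Let $\tfrac{a}{b}$ and $\tfrac{c}{d}$ be the immediate predecessor and successor of $\tfrac{m}{n}$ in $F_n$. The central claim, which I will call the \emph{unimodular neighbour property}, is that any two consecutive fractions $\tfrac{p}{q} < \tfrac{r}{s}$ in a Farey sequence satisfy $qr - ps = 1$. Granting this, $bm - an = 1$, so $x = b$ and $y = -a$ witness Bézout's identity.

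To prove the unimodular neighbour property I would induct on the order of the Farey sequence. The base case $F_1 = \{\tfrac{0}{1},\tfrac{1}{2},\tfrac{1}{1}\}$ is a direct check. For the inductive step, $F_n$ is obtained from $F_{n-1}$ by inserting between each pair of consecutive fractions $\tfrac{p}{q}, \tfrac{r}{s}$ of $F_{n-1}$ with $q+s = n$ their mediant $\tfrac{p+r}{q+s}$; the Mediant Property (Theorem \ref{def:Mediant Property}) ensures that this new term slots strictly between its parents, so the ordering is preserved. Assuming $qr-ps=1$ by the inductive hypothesis, a one-line computation $q(p+r) - p(q+s) = qr-ps = 1$ (and the symmetric one on the other side) shows that the two freshly created neighbour pairs inherit the unimodular relation, completing the induction.

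The main obstacle I anticipate is justifying that $F_n$ really is constructed from $F_{n-1}$ purely by mediant insertion — that is, every fraction occurring in $F_n \setminus F_{n-1}$ arises as the mediant of two consecutive elements of $F_{n-1}$, and conversely such mediants do land in $F_n$ with denominator exactly $n$. Both directions hinge on the invariant $qr-ps=1$: this invariant rules out any ``skipped'' reduced fraction with denominator $\leq n$ sneaking between two supposed neighbours, and it forces the inserted mediant to be already in lowest terms. This bookkeeping must be carried out carefully before the unimodular property can be bootstrapped, but once it is in place, Bézout's identity — and hence the GCD theorem — falls out of the neighbour relation as described above.
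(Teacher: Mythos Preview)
Your plan is correct and rests on the same core engine as the paper: the unimodular relation $qr-ps=1$ for consecutive fractions, propagated by induction through the mediant computation $q(p+r)-p(q+s)=qr-ps$. Where you differ is in the ambient structure. You work inside the Farey sequence $F_n$, so membership of $\tfrac{m}{n}$ is immediate from the definition once you reduce to $0<m<n$; the price is the ``obstacle'' you flag, namely proving that $F_n$ is genuinely obtained from $F_{n-1}$ by mediant insertion alone. The paper instead runs the induction on the Stern-Brocot levels $S_k$, where \emph{every} mediant is inserted at each stage, so the inductive step is clean; the price is that membership of $\tfrac{m}{n}$ is no longer automatic, and the paper spends Theorem~\ref{theorem: 2} bounding the number of steps before a given reduced $\tfrac{a}{b}$ appears. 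These are dual bookkeepings of the same content: your Farey route front-loads the ``no fraction is skipped'' argument into the construction lemma, while the paper's Stern-Brocot route back-loads it into an existence theorem. Your explicit reduction to $0<m<n$ is a point in your favour, since the paper's tree lives in $[0,1]$ and this reduction is left tacit there.
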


Now thar we have defined the definitions and theorems of the Farey Sequence and Euclid's theorem, we will establish some important tools and lemmas of the Stern-Brocot Tree in section \ref{sec:Stern-Brocot Tree} and show that we can proof the gcd theorem using the Farey's sequence in section \ref{sec: proof}.

\section{More about the Farey's Sequence}
\label{sec:Farey's Sequence}

We shall start off by proving the Mediant property, which is the key property of the Farey's sequence. 
\begin{proof}
Since $\frac{a}{b} \textless \frac{a+c}{b+d} \textless \frac{c}{d}$. Therefore $\frac{a}{b} \textless \frac{a+c}{b+d} $, so:
\[\frac{a}{b} \textless \frac{a+c}{b+d} \]
\[\frac{a+c}{b+d} - \frac{a}{b} \textgreater 0\]
\[\frac{bc-ad}{b(b+d)}\textgreater 0\]
\\ And likewise, with $\frac{a+c}{b+d} \textless \frac{c}{d}$
\[ \frac{a+c}{b+d} \textless \frac{c}{d} \]
\[\frac{c}{d} - \frac{a+c}{b+d} \textgreater 0\]
\[\frac{bc-ad}{d(b+d)}\textgreater 0\]
\end{proof}

Proof adopted from Ainsworth, Dawson, Pianta, Warwick, found in citation \cite{citation03}\\

The Farey sequence can be constructed in the following manor:\\
The first row,$F_1$, contains $\frac{0}{1}$ and $\frac{1}{1}$. 
From here, assume the nth row has been completed, the nth+1 row is made by copying the nth row, then for each pair of consecutive fractions $\frac{a}{b}$ and $\frac{c}{d}$ that are in the nth row that sitisfies $b+d\leq n+1$,
 insert inbetween $\frac{a}{b}$ and $\frac{c}{d}$ the new fraction $\frac{a+c}{b+d'}$.\\
So, for row 2, we look at the only pair of consecutive fractions $\frac{0}{1}$ and $\frac{1}{1}$. $1+1 \leq 1+1$, so the pair satifies the condition, we now insert the new fraction $\frac{0+1}{1+1}=\frac{1}{2}$ in the new row, giving $\frac{0}{1} \frac{1}{2} \frac{1}{1}$.\\
Here is a filled out table up to $F_6$\\
\begin{center}
\includegraphics[width=3in]{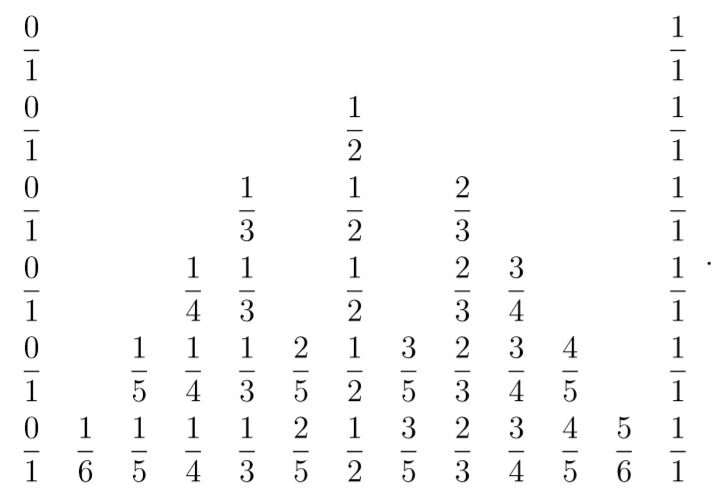}
\end{center}

\section{Stern-Brocot Tree}
\label{sec:Stern-Brocot Tree}

Previously, in section \ref{sec:Farey's Sequence}, we defined the Fareys sequence and identified properties of the Fareys sequence. In this section, we hope to further use these properties of theorem \ref {def:Mediant Property}, the mediant property, in the Stern-Brocot Tree, which is a further expansion from Farey's sequence.\\

\begin{definition}[Stern-Brocot Tree]
\label{def:Stern-Brocot Tree}
Let $S_0 = \{ 0,1\}$. Given $S_n$, let $S_n'$ denote the set of mediants of adjacent elements of $S_n$. Then let $S_{n+1} = S_n \cup S_n'$.\\
The {\it Stern-Brocot Tree} is an infinite graph whose vertex set is $V = \bigcup_{n=0}^\infty S_n$,
 and where there is an edge between vertices $v$ and $w$ if and only if $v \in S_n'$, $w \in S_{n+1}$, and $w$ is obtained as a mediant of $v$ and some other element of $S_n$.
\end{definition}

\begin{definition}[Ancestor]
\label{def:Ancestor} 
The {\it Ancestor} of a number in the Stern-Brocot Tree,defined in definition \ref{def:Stern-Brocot Tree}, is any number that appears above it in the same branch. 
\end{definition}

\begin{definition}[Child]
\label{def:Child}
\label{def:Child}
The {\it Child} of a number in the Stern-Brocot Tree is any number that appears below it. Each number has a Right Child ($R$) and a Left Child ($L$) 
\end{definition}

\begin{lemma}
The Left Child of any number is the mediant of that number and its Ancestor to the left. 
\end{lemma}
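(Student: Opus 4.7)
\noindent\emph{Proof plan.} The plan is to consider the level $n$ at which $v$ first appears in the Stern-Brocot construction, i.e., the smallest $n$ with $v \in S_n$. Writing $v = \alpha \bigoplus \beta$ where $\alpha < \beta$ are the adjacent pair in $S_{n-1}$ whose mediant is $v$, the Mediant Property (Theorem \ref{def:Mediant Property}) gives $\alpha < v < \beta$, and by Definition \ref{def:Stern-Brocot Tree} both $\alpha$ and $\beta$ are joined to $v$ by an edge, hence both are ancestors of $v$.

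The first step is to verify that the Left Child of $v$ is $\alpha \bigoplus v$. Once $v$ is inserted between $\alpha$ and $\beta$ in passing from $S_{n-1}$ to $S_n$, the pair $\alpha, v$ becomes adjacent in $S_n$, so in going to $S_{n+1}$ the construction produces the mediant $\alpha \bigoplus v$ between them. Under the convention that $L(v)$ is the immediate descendant of $v$ lying numerically below $v$, this new mediant is precisely $L(v)$; the symmetric argument gives $R(v) = v \bigoplus \beta$.

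The second step is to identify $\alpha$ with ``the Ancestor to the left'' in the sense of the lemma, i.e., the greatest ancestor of $v$ strictly smaller than $v$. The crucial observation is that $\alpha$ and $\beta$ are adjacent in $S_{n-1}$ and all strict ancestors of $v$ lie in $S_{n-1}$ (the edge relation only points one level up at a time), so no ancestor of $v$ can sit inside the open interval $(\alpha,\beta)$. Consequently, the ancestors of $v$ split cleanly into those at or below $\alpha$ and those at or above $\beta$, and in particular $\alpha$ is the largest ancestor lying below $v$.

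I expect the main obstacle to be bookkeeping rather than arithmetic: the definitions of Ancestor and of a ``branch'' in the tree are stated informally, so one must first pin down the intended interpretation (the nearest strict ancestor below $v$) and then check that the recursive structure of the Stern-Brocot construction preserves this interpretation at every level. Once the separation of ancestors into a left chain and a right chain is secured, the lemma follows immediately by combining it with the first step: $L(v) = \alpha \bigoplus v$, where $\alpha$ is the Ancestor to the left of $v$.
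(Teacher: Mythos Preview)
The paper states this lemma without proof, so there is nothing to compare your argument against; you are supplying a justification where the paper offers none.

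Your two-step plan is correct in outline: (i) once $v$ is inserted between its creating pair $\alpha<\beta$, the pair $(\alpha,v)$ becomes adjacent in $S_n$, so the next level produces $\alpha\bigoplus v$ on that side, and this is the Left Child; (ii) since all strict ancestors of $v$ already lie in $S_{n-1}$ and $\alpha,\beta$ are adjacent there, no ancestor can fall strictly between them, so $\alpha$ is indeed the nearest ancestor of $v$ on the left. One point to tighten: you assert that Definition~\ref{def:Stern-Brocot Tree} gives a direct edge from \emph{each} of $\alpha$ and $\beta$ to $v$. In the standard Stern--Brocot tree (and under any coherent reading of the paper's somewhat garbled edge condition) $v$ has exactly one parent, namely whichever of $\alpha,\beta$ was itself created more recently; the other is an ancestor further up the branch. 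Your conclusion that both are ancestors is still correct, but the justification should be a short induction on the level (the creating pair of any vertex always consists of ancestors of that vertex) rather than an appeal to two direct edges. With that adjustment your argument goes through and fills the gap the paper leaves.
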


\begin{exmp}
\label{exmp: 3.1}
Here is an example of the Stern-Brocot Tree.
\begin{center}
\includegraphics[width=5in]{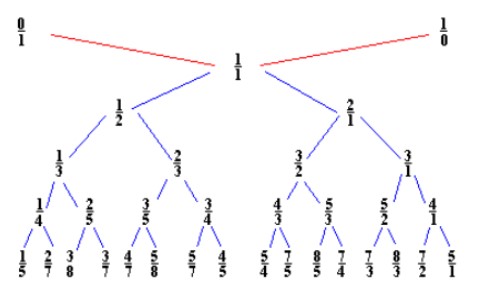}
\end{center}
\end{exmp}

\section{Using Farey's Sequence to Prove GCD theorem}
\label{sec: proof}

\begin{definition}[Reduced Form] 
\label{definition: reduced form}
A fraction is said to be in its {\it Reduced Form} if the fraction $\frac{m}{n}$, where $m,n \in \Z$ is expressed in the lowest terms. Therefore, $m$ and $n$ have to be coprime.
\end{definition}

\begin{theorem}
\label{theorem: 1}
For every $\frac{m}{n}$ appearing in the Stern-Brocot Tree in the reduced form, there exist $x,y$ such that $mx+ny=1$, where $x,y \in \Z$
\end{theorem}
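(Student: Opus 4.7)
The plan is to exploit the classical unimodular invariant of the Stern-Brocot construction: if $\frac{a}{b}$ and $\frac{c}{d}$ are adjacent entries at some stage $S_n$ with $\frac{a}{b}<\frac{c}{d}$, then $bc-ad=1$. Once this lemma is in hand, every fraction in the tree is either a root or is born as the mediant $\frac{a+c}{b+d}$ of two such adjacent parents, and the desired Bezout coefficients can be read off directly from $a,b,c,d$.

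First I would prove the invariant by induction on $n$. The base case $S_0=\{\tfrac{0}{1},\tfrac{1}{1}\}$ gives $1\cdot 1 - 0\cdot 1 = 1$. For the inductive step, suppose $\frac{a}{b},\frac{c}{d}$ are adjacent in $S_n$ with $bc-ad=1$. By Definition \ref{def:Stern-Brocot Tree}, $S_{n+1}$ inserts the mediant $\frac{a+c}{b+d}$ between them, producing two new adjacent pairs whose determinants expand as
\[
b(a+c)-a(b+d)=bc-ad=1, \qquad (b+d)c-(a+c)d=bc-ad=1,
\]
so the invariant propagates to $S_{n+1}$. Any pair that was already adjacent in $S_n$ and is not separated in $S_{n+1}$ keeps its determinant unchanged, and no new adjacencies are created across non-consecutive $S_n$-entries, so the invariant holds at every stage.

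Next, given any $\frac{m}{n}$ appearing in the tree in reduced form, I locate the first stage at which it appears. If $\frac{m}{n}\in S_0$ the Bezout relation is immediate (take $x=0,y=1$ for $\tfrac{0}{1}$ and $x=1,y=0$ for $\tfrac{1}{1}$). Otherwise $\frac{m}{n}=\frac{a+c}{b+d}$ is the mediant of adjacent $\frac{a}{b},\frac{c}{d}\in S_k$, so setting $x=b$ and $y=-a$ gives
\[
xm+yn=b(a+c)-a(b+d)=bc-ad=1
\]
by the invariant, and $x,y\in\Z$ because $a,b$ are integers.

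The main obstacle will be the bookkeeping around the Stern-Brocot recursion $S_{n+1}=S_n\cup S_n'$, specifically verifying that \emph{adjacency} is preserved and transformed correctly when new mediants are inserted; Theorem \ref{def:Mediant Property} is what guarantees that the mediant really sits strictly between its parents and therefore that the two new adjacent pairs are the only ones created. A secondary observation worth recording is that the invariant $bc-ad=1$ automatically forces every Stern-Brocot fraction to be in reduced form, since any common divisor of $m=a+c$ and $n=b+d$ would divide $bm-an=bc-ad=1$; the "reduced form" hypothesis in Theorem \ref{theorem: 1} is therefore really a consequence of appearing in the tree, not an extra assumption.
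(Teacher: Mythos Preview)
Your proposal is correct and follows essentially the same approach as the paper: both argue by induction on the stage $S_k$ that adjacent entries $\tfrac{a}{b}<\tfrac{c}{d}$ satisfy the unimodular invariant $bc-ad=1$, and both propagate this to the mediant via the identities $b(a+c)-a(b+d)=bc-ad$ and $(b+d)c-(a+c)d=bc-ad$. Your write-up is in fact cleaner---you start the induction at $S_0$, you explicitly name the B\'ezout coefficients $x=b$, $y=-a$, and you correctly observe that the invariant forces reducedness automatically---whereas the paper's inductive step contains a muddled line (deriving $n_1m_2-m_1n_2\le 1$ from $\tfrac{m_2}{n_2}\le 1$) that is really just an appeal to the inductive hypothesis in disguise.
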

\begin{proof}
Recalling from definition \ref{def:Stern-Brocot Tree}, we rename $S_n$ as $S_k$ We claim by induction that the theorem holds. 
\\For base case, $S_1=\{ \frac{0}{1},\frac{1}{2},\frac{1}{1} \}$, notice that $(1)(1)-(2)(0)=(2)(1)-(1)(1)=(2)(1)-(1)(1)=1$, so the result is true for $k=1$. 
\\Now to test $S_{k+1}$. Let $a=\frac{m_1}{n_1}$ and $b=\frac{m_2}{n_2}$, where $m_1,m_2,n_1,n_2 \in \Z^{nonneg}$. $S_{k+1} = \{ \dots \frac{m_1}{n_1}, \frac{m_1+m_2}{n_1+n_2} , \frac{m_2}{n_2}, \dots \}$. We know that $0 \leq\frac{m_1}{n_1} \leq \frac{m_1+m_2}{n_1+n_2} \leq  \frac{m_2}{n_2} \leq 1$. 
Lets zoom in on  $0 \leq\frac{m_1}{n_1} \leq \frac{m_2}{n_2} \leq 1$, which gives us $0 \leq  n_1m_2-m_1n_2 \leq 1$. $(n_1m_2-m_1n_2)$ can hence only be $0$ or $1$ since $m_1,m_2,n_1,n_2 \in \Z^{nonneg}$. $n_1,n_2 \neq 0$ and $n_1m_2\ne m_1n_2 $, so therefore $n_1m_2-m_1n_2=1$. 
\\Notice, $S_{k+1} = \{ \dots \frac{m_1}{n_1}, \frac{m_1+m_2}{n_1+n_2} , \frac{m_2}{n_2}, \dots \}$, we have $n_1(m_1+n_2)-m_1(n_1+n_2)=n_1m_2-m_1n_2=1$, likewise, $m_2(n_1+n_2)-n_2(m_1+n_2)=n_1m_2-m_1n_2=1$. So this holds true for $k+1$.
\end{proof}

\begin{definition}[consecutive]
\label{def:consecutive} 
We say that two fractions $\frac{m_1}{n_1}$ and $\frac{m_2}{n_2}$ are {\it consecutive} if one fraction is a direct child (defined in definition \ref{def:Child}) of the other fraction at any stage of the construction of the Stern-Brocot Tree. 
\end{definition}

\begin{theorem}
\label{theorem: 2}
Every rational number $\frac{m}{n}$ in the Stern-Brocot Tree appears in its reduced form and appears exactly once.
\end{theorem}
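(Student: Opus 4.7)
The statement breaks naturally into two claims: every fraction appearing in the tree is already in lowest terms, and no fraction is listed twice. I plan to handle them separately, relying on the mediant recursion and on the identity that is really established in the proof of Theorem \ref{theorem: 1}.

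For the reduced form claim, the idea is to extract from the construction the unimodular relation between neighbours. If $\frac{m}{n}$ appears in some $S_k$ and $\frac{p}{q}$ is an adjacent element in the natural ordering of $S_k$, then the induction in the proof of Theorem \ref{theorem: 1} gives $|mq - np| = 1$. Any common divisor of $m$ and $n$ then divides $1$, so $\gcd(m,n) = 1$ and $\frac{m}{n}$ is in reduced form. The endpoints $\frac{0}{1}$ and $\frac{1}{1}$ are reduced by inspection, so nothing is lost by restricting attention to interior rationals, which by $|S_0|=2$ always have at least one neighbour.

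For the uniqueness claim, I would induct on the level $k$, strengthening the hypothesis to: $S_k$ can be written as a strictly increasing finite sequence of pairwise distinct rationals. The base case $S_0 = \{0,1\}$ is immediate. For the inductive step, the Mediant Property (Theorem \ref{def:Mediant Property}) guarantees that whenever $\alpha_i < \alpha_{i+1}$ are consecutive in $S_k$, their mediant lies strictly inside the open interval $(\alpha_i,\alpha_{i+1})$. This delivers two facts at once: (a) each newly created mediant is different from every element of $S_k$, since it lies strictly between two of them; and (b) mediants arising from different consecutive pairs of $S_k$ live in disjoint open intervals, hence are distinct from one another. Thus $S_{k+1}$ is again strictly increasing with no repeats, closing the induction.

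The main point requiring care is translating ``no repetitions inside a single $S_k$'' into ``appears exactly once in the whole tree.'' Because $S_k \subseteq S_{k+1}$ by construction, the sensible reading is that each rational has a unique level at which it is first introduced; the induction above gives exactly that, since at every step the inserted mediants are shown to be genuinely new. I would deliberately not attempt the stronger existence claim that every reduced fraction in $[0,1]$ eventually appears somewhere in the tree — that would demand a separate descent argument on the pair $(m,n)$ via the Euclidean algorithm, and the theorem as stated does not ask for it.
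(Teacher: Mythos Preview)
Your handling of the two claims you do address is sound. Extracting $\gcd(m,n)=1$ from the neighbour relation $|mq-np|=1$ is the right way to get reduced form, and the strict-ordering induction via the Mediant Property is a clean and correct uniqueness argument; on both points you are actually more explicit than the paper, which leaves them largely implicit in equations (\ref{eq:1})--(\ref{eq:3}) and the line invoking the mediant ordering.

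The substantive gap is the existence claim, which you consciously decline. The paper reads the theorem as also asserting that \emph{every} reduced fraction $\frac{a}{b}$ in $[0,1]$ eventually occurs in the tree, and the bulk of its proof is devoted to exactly this: from $\frac{m_1}{n_1}<\frac{a}{b}<\frac{m_2}{n_2}$ with $\frac{m_1}{n_1},\frac{m_2}{n_2}$ consecutive one gets $n_1a-m_1b\ge 1$ and $m_2b-n_2a\ge 1$, and combining these with the unimodular relation $m_2n_1-m_1n_2=1$ yields $a+b\ge m_1+n_1+m_2+n_2$. Since the right-hand side strictly grows with each mediant insertion that misses $\frac{a}{b}$, the process terminates in at most $a+b$ steps. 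This is not an optional embellishment in the paper's logic: Theorem~\ref{theorem: 3} invokes Theorem~\ref{theorem: 2} precisely to place an arbitrary coprime pair $(m,n)$ into the tree so that Theorem~\ref{theorem: 1} can be applied. Under your reading and proof, that deduction would not go through, so the descent argument you set aside is in fact the part the paper most needs.
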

\begin{proof}
Let $\frac{m_1}{n_1}$ and $\frac{m_2}{n_2}$ be consecutive fractions. Recalling Theorem \ref{theorem: 1} then,
\[ m_2n_1-m_1n_2=1 \label{eq:1} \tag{1}\]
Therefore, based on the definition of mediant (found in theorem \ref{def:Mediant Property}), the mediant of the 2 fractions should satisfy:
\[ n_1(m_1+m_2) - (n_1+n_2)m_1 = 1 \label{eq:2} \tag{2}\ \]
\[ m_2(n_1+n_2)-(m_1+m_2)n_2 = 1\label{eq:3} \tag{3} \]

Note that both (2) and (3) are equivalent to (1) if we compute it as follows:
\[ For\ (\ref{eq:2}): n_1(m_1+m_2) - (n_1+n_2)m_1 =  n_1m_1+n_1m_2 - n_1m_1-m_1n_2 =m_2n_1-m_1n_2=1\]
\[ For\ (\ref{eq:3}): m_2(n_1+n_2)-(m_1+m_2)n_2 =  n_1m_2+n_2m_2 - n_2m_1-m_2n_2 =m_2n_1-m_1n_2=1\]
\\ Also if $\frac{m_1}{n_1} \textless \frac{m_2}{n_2}$ then 
\[ \frac{m_1}{n_1} \textless \frac{(m_1+m_2)}{(n_1+n_2)} \textless\frac{m_2}{n_2} \]
this is based of the Stern-Brocot Tree using Mediants. 
\\Finally, let $\frac{a}{b}$ be a fraction in lowest terms where $a,b \textgreater 0$, to show that this fraction will appear somewhere in the tree
\[ \frac{m_1}{n_1} \textless \frac{a}{b} \textless \frac{m_2}{n_2} \]
Expanding the equation, we get
\[ m_1n_2b \textless an_1n_2 \textless m_2n_1b \]
Seperating the inequalities, we can get
\[ m_1n_2b - an_1m_2 \textless 0 \ and\ an_1n_2 - m_2bn_1 \textless 0 \]
This can be reduced to:
\[n_1a-m_1n \geq 1 \ and\ m_2b-n_2a \geq 1\]

from where
\[ (m_2+n_2)(m_1a-m_1b)+(m_1+n_1)(m_2b-n_2a) \geq m_1+n_1+m_2+n_2 \]

Where expanding it we get:
\[ m_2n_1a - m_1n_2a + m_2n_1b - m_1n_2b \geq m_1+n_1+m_2+n_2  \]

Using (\ref{eq:1}), 
\[ a+b \geq m_1+n_1+m_2+n_2 \]

This shows that there would be at most (a+b) steps of mediants before $\frac{a}{b}$ appears in the Stern-Brocot Tree. 
\end{proof}
Proof adopted from Bogomolny, found in citation \cite{citation06}.

\begin{theorem}
\label{theorem: 3}
If $m$ and $n$ are coprime, there exists $x, y$ such that $mx+ny=1$, where $x,y,m,n \in \Z$ 
\end{theorem}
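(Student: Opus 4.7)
The plan is to combine Theorem \ref{theorem: 1} and Theorem \ref{theorem: 2} in a direct way, after a few sign and boundary reductions. The overarching strategy is that any coprime pair $(m,n)$ corresponds to a reduced fraction $m/n$, which by Theorem \ref{theorem: 2} must appear somewhere in the Stern-Brocot Tree, and then Theorem \ref{theorem: 1} produces the desired integers $x,y$ with $mx+ny=1$. To set up the hypotheses of those theorems, I first handle the degenerate and sign cases. If one of $m,n$ is zero, then coprimality forces the other to be $\pm 1$ and the conclusion is trivial. Otherwise, because $\gcd(m,n)=\gcd(|m|,|n|)$ and sign-flipping $m$ (resp.\ $n$) can be absorbed into $x$ (resp.\ $y$), I may assume $m,n>0$. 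Since the equation $mx+ny=1$ is symmetric in the pairs $(m,x)$ and $(n,y)$, I can further assume without loss of generality that $0<m\le n$, so that $m/n\in(0,1]$, which is precisely the range covered by the Stern-Brocot construction starting from $S_0=\{0,1\}$.

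With these reductions in place, coprimality of $m$ and $n$ means $m/n$ is in reduced form in the sense of Definition \ref{definition: reduced form}. Applying Theorem \ref{theorem: 2}, the fraction $m/n$ appears in the Stern-Brocot Tree. Applying Theorem \ref{theorem: 1} to this occurrence then yields integers $x,y\in\Z$ with $mx+ny=1$. Undoing the sign reductions from the first step recovers the statement for the original $m,n$, completing the argument.

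The main obstacle is essentially bookkeeping rather than a new idea: once Theorems \ref{theorem: 1} and \ref{theorem: 2} are in hand, the remaining work is just to verify that the reduction to the positive, in-range case is valid. One subtlety to be careful about is that Theorem \ref{theorem: 1} gives only an \emph{existence} statement for integers $x,y$, which may well be negative; tracing the proof of Theorem \ref{theorem: 1}, the witnesses come essentially from the numerator and denominator of a neighbor of $m/n$ in some $S_k$ via the determinant identity $m_2n_1-m_1n_2=1$, with an appropriate choice of sign. Beyond this, no further ideas are required.
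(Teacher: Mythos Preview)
Your approach is correct and is essentially the same as the paper's: reduce to the fraction $m/n$, invoke Theorem~\ref{theorem: 2} to place it in the Stern--Brocot tree, and then apply Theorem~\ref{theorem: 1} to extract $x,y$. The only difference is that you add explicit sign/zero/ordering reductions to fit the hypotheses of those theorems, which the paper's proof omits; this extra bookkeeping is sound and arguably makes the argument more complete, but the underlying idea is identical.
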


\begin{proof}
Consider the fraction $\frac{m}{n}$. By theorem $\ref{theorem: 2}$, it appears that since $m$ and $n$ are co-primes, $\frac{m}{n}$ can appear in its already reduced form (refer to def \ref{definition: reduced form}) in the Stern-Brocot Tree. Therefore, by theorem $\ref{theorem: 1}$, we conclude that for $\frac{m}{n}$ in the Stern-Brocot Tree, there exist $x,y$ such that $mx+ny=1$, where $x,y \in \Z$. This obeys the GCD theorem as defined by Definition \ref{def: gcd theorem}. 
\end{proof}

\end{document}